\documentclass[11pt,reqno]{amsart}

\usepackage{amsfonts, amsthm, amsmath}
\allowdisplaybreaks[4]

\usepackage{rotating}

\usepackage{tikz}

\usepackage{graphics}

\usepackage{amssymb}

\usepackage{circledsteps}

\usepackage{amsmath}

\usepackage{amscd}

\usepackage[latin2]{inputenc}

\usepackage{t1enc}

\usepackage[mathscr]{eucal}

\usepackage{indentfirst}

\usepackage{graphicx}

\usepackage{graphics}

\usepackage{pict2e}

\usepackage{mathrsfs}

\usepackage{enumerate}

\usepackage[pagebackref]{hyperref}
\hypersetup{colorlinks=true}
\usepackage{cite}
\usepackage{color}
\usepackage{epic}
\usepackage{hyperref} 
\usepackage{framed}
\usepackage{mathabx}

\numberwithin{equation}{section}
\theoremstyle{plain}

\newtheorem{theorem}{Theorem}[section]

\newtheorem{lemma}[theorem]{Lemma}

\theoremstyle{definition}

\newtheorem{Def}[theorem]{Definition}

\newtheorem{example}[theorem]{Example}

\newtheorem{remark}[theorem]{Remark}

\newtheorem{?}[theorem]{Problem}

\newtheoremstyle{named}{}{}{\itshape}{}{\bfseries}{.}{.5em}{#1\thmnote{ #3}}
\theoremstyle{named}

\newcommand{\f}[1]{\ifthenelse{\equal{#1}{1}}{(q;q)_\infty}{(q^{#1};q^{#1})_{\infty}}}

\def\ped{\mathrm{ped}}
\def\sP{\mathscr{P}}
\def\sF{\mathscr{F}}
\def\sV{\mathscr{V}}
\def\sD{\mathscr{D}}
\def\sA{\mathscr{A}}
\def\sB{\mathscr{B}}
\def\sC{\mathscr{C}}

\def\ri{\rightarrow}
\def\la{\lambda}
\def\al{\alpha}
\def\ep{\epsilon}
\def\si{\sigma}
\def\xri{\xrightarrow}

\begin{document}
\title[Combinatorial perspectives on identities for partitions with distinct even parts]{Combinatorial perspectives on identities for partitions with distinct even parts}

\author[H. Li]{Haijun Li}
\address[Haijun Li]{College of Mathematics and Statistics, Chongqing University, Chongqing 401331, P.R. China}
\email{lihaijun@cqu.edu.cn; lihaijune@163.com}

\date{\today}

\begin{abstract}
Partitions with distinct even parts have long been the subject of extensive research. In this paper, We present some new perspectives on such partitions from a combinatorial viewpoint, and connect them with signed partitions and bicolored partitions, thereby obtaining several partition identities. We construct bijective proofs for each of our results. Furthermore, these bijections will partially answer the combinatorial problems posed by Andrews-El Bachraoui and K\i l\i \c c-Kur\c sung\"oz. respectively. 
\end{abstract}

\keywords{Signed partition, bicolored partition, partition identity, bijective combinatorics, $q$-series.
\newline \indent 2020 {\it Mathematics Subject Classification}. 11P84, 05A17, 05A19.}

\maketitle
\section{Introduction}\label{sec:intro}

A {\it partition} $\la$ of a positive integer $n$ is a nonincreasing sequence of positive integer $(\la_1, \la_2, ..., \la_\ell)$ such that $\la_1+\la_2+\cdots +\la_\ell=n$ (see \cite{andtp}). Each $\la_i$ is called a {\it part} of the partition $\la$, and $n$ is called the {\it weight} of $\la$ and denote $|\la|=n$. The {\it length} $\ell=\ell(\la)$ of $\la$ is the number of parts in $\la$. As a convention, we denote the partition of $0$ by $\ep$. For any two partitions $\la$ and $\mu$, $\la\oplus\mu$ gives a partition by collecting all parts in $\la$ and $\mu$.

In 2009, Andrews \cite{and09} denoted the $\ped(n)$ as the number of partitions of $n$ with distinct even parts (odd parts are unrestricted) and found that 
\begin{align}\label{id:gf_ped}
\sum_{n\geq 0}\ped(n)q^n=\frac{(-q^2; q^2)_{\infty}}{(q; q^2)_{\infty}}=1+q+2q^2+3q^3+4q^4+6q^5+9q^6+12q^7+\cdots,
\end{align}
where we employ the standard notation~\cite{GR90}
\begin{align*}
(a; q)_0:=1,\ (a; q)_n:=\prod_{k=0}^{n-1}(1-aq^{k}),\ n\geq 1,\text{ and }(a; q)_{\infty}:=\lim_{n\ri \infty}(a; q)_n,\ |q|<1.
\end{align*}
This partition, along with its arithmetic properties, has been extensively studied in recent years, see for instance \cite{AB251, AHS10, che11, mer17}. Recently, Andrews and El Bachraoui~\cite{AB252} introduced a new set of partitions equinumerous with those with distinct even parts, and asked for a combinatorial proof of the following theorem.

\begin{theorem}[{cf. \cite[Definition 1 and Corollary 1]{AB252}}]\label{thm:AB_Fn}
For any nonnegative integer $n$, let $F(n)$ denote the number of partitions of $n$ where the smallest part is $1$, the first occurrence of $1$ may be overlined, each part is at most twice the number of the occurrences of $1$, and the remaining odd parts are not repeated. Then we have
\begin{align*}
F(n)=\ped(n).
\end{align*}
\end{theorem}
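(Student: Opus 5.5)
The plan is to prove the identity first at the level of generating functions, then turn that computation into a bijection, since the paper is after combinatorial proofs.

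\emph{Generating function for $F(n)$.} Condition on the number $k\geq 1$ of occurrences of $1$. A partition counted by $F(n)$ with exactly $k$ ones has the following structure:
\begin{itemize}
\item the $k$ ones contribute $q^k$, with a factor $2$ for the optional overline on the first $1$;
\item every other part is at most $2k$;
\item the even parts $2,4,\dots,2k$ are unrestricted, contributing $1/(q^2;q^2)_k$;
\item the odd parts $3,5,\dots,2k-1$ are distinct, contributing $(-q^3;q^2)_{k-1}$.
\end{itemize}
Hence
\begin{align*}
\sum_{n\geq 0}F(n)q^n=\sum_{k\geq 1}\frac{2q^k(-q^3;q^2)_{k-1}}{(q^2;q^2)_k}.
\end{align*}
The factor $2$ should be read as $(1+1)$: the overlined and non-overlined first $1$. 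I would compare this with $(1+q)(-q^3;q^2)_{k-1}=(-q;q^2)_k$. Splitting the sum this way gives one piece of the form $\sum_k q^k(-q;q^2)_k/(q^2;q^2)_k$ plus a correction.

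\emph{Evaluating the sum.} I would evaluate $\sum_{k\geq 0} z^k(-q;q^2)_k/(q^2;q^2)_k$ with $z=q$ by the $q$-binomial theorem in base $q^2$, with $a=-q$. This gives $(-q^2;q^2)_\infty/(q;q^2)_\infty$, which is exactly \eqref{id:gf_ped}. What remains is to show that the correction terms cancel or telescope. To do this I would compare consecutive values of $k$, using $1/(q^2;q^2)_k-1/(q^2;q^2)_{k-1}=q^{2k}/(q^2;q^2)_k$, and shift indices. I expect this bookkeeping to be the main obstacle.

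\emph{Checking the boundary conventions.} Before the algebra, I would check small cases against the definition, since the boundary terms are delicate. For $n=2$ the count agrees: $F(2)=2=\ped(2)$. For $n=1$, however, the literal reading gives two partitions, $1$ and $\overline{1}$, while $\ped(1)=1$. So I would first pin down from \cite{AB252} the exact convention for the overline when $k=1$. For instance, the overline may be forbidden when it would violate the bound $2k$, or $n=0$ and $n=1$ may be adjusted. The correct convention is what should make the telescoping close up exactly.

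\emph{Combinatorial proof.} I would then translate the computation into a bijection. The $q$-binomial step corresponds to the standard bijection behind $\sum_k z^k(a;q)_k/(q;q)_k$. Starting from a partition with distinct even parts, let $k$ be determined by the largest even part together with the number of odd parts. The odd parts greater than $1$ are then encoded in columns: conjugating the odd part and the even part separately, reading column lengths gives even parts at most $2k$ and ones. The distinct-odd part $(-q^3;q^2)_{k-1}$ records which of the values $3,\dots,2k-1$ appear. Finally, the overline on $1$ records the parity information lost when the $(1+q)$ factor is split off.
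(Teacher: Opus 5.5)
Your overall plan (write down the generating function of $F(n)$ and finish with the $q$-binomial theorem in base $q^2$) is the paper's. The gap is that your generating function rests on a misreading of the overline convention, and this is fatal rather than a boundary nuisance. You let $k$ count all the $1$'s, overlined or not, impose the bound $2k$, and record the overline by a factor $2$. The resulting series has coefficients $2,2,4,4,8$ for $n=1,\dots,5$, whereas $\ped(n)=1,2,3,4,6$. The surplus objects are:
\begin{itemize}
\item $\overline{1}$ for $n=1$;
\item $2+\overline{1}$ for $n=3$;
\item $3+1+\overline{1}$ and $2+2+\overline{1}$ for $n=5$.
\end{itemize}
So the discrepancy is not confined to $n\le 1$, and the agreement you observed at $n=2$ is a coincidence. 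No index shifting with $1/(q^2;q^2)_k-1/(q^2;q^2)_{k-1}=q^{2k}/(q^2;q^2)_k$ can make your ``correction terms'' cancel, because the identity you would be proving is false.

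The reading that makes the theorem true is the one the paper uses in its analytic proof of Theorem~\ref{thm:res1}. The bound is twice the number $N$ of \emph{ordinary} $1$'s. The part $\overline{1}$ is an extra part: a distinct odd part of size $1$, on the same footing as $3,5,\dots,2N-1$. It therefore requires $N\ge 1$ and does not enlarge the bound. The odd parts then contribute $(1+q)(1+q^3)\cdots(1+q^{2N-1})=(-q;q^2)_N$ rather than $2(-q^3;q^2)_{N-1}$, and
\[
\sum_{n\ge 0}F(n)q^n=\sum_{N\ge 0}\frac{(-q;q^2)_N\,q^N}{(q^2;q^2)_N}=\frac{(-q^2;q^2)_\infty}{(q;q^2)_\infty}
\]
follows at once from the $q$-binomial theorem with $a=-q$, $z=q$. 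There is nothing to telescope. The paper also carries variables $x,y$ to track $\ell_1$ and $\overline{\ell}_o$.

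Your bijective sketch inherits the same misreading and, as written, does not define a map. In the paper's bijection $f_1$, the number of ordinary $1$'s equals the total number of parts $m=\ell(\lambda)$. It is not something determined by the largest even part and the number of odd parts. The number of odd parts exceeding $1$, together with $\overline{1}$, equals the number of even parts of $\lambda$. Concretely:
\begin{itemize}
\item For each even part in position $i$, subtract $1$ from it and $2$ from each of the $i-1$ parts before it, and record the odd part $2i-1$. This recorded part is $\overline{1}$ exactly when $i=1$.
\item Distinctness of the even parts keeps what remains a partition, now into $m$ odd parts.
\item Removing a $1$ from each of these gives the $m$ ordinary $1$'s and a partition into at most $m$ even parts.
\item Halving, conjugating and doubling that partition gives even parts at most $2m$.
\end{itemize}
So the overline is not parity bookkeeping for a split-off factor $(1+q)$. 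It is simply the image of an even largest part.
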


Let $\sP_{ed}(n)$ and $\sF(n)$ denote the sets of partitions counted by $\ped(n)$ and $F(n)$, respectively. Our first objective is to relate $\sP_{ed}(n)$ and $\sF(n)$ to a class of {\it signed partitions}, leading to a refined three-parameter identity among the three partition sets. Signed partitions were first introduced by Andrews \cite{and07} and further developed recently by Alanazi, Munagi and Sills~\cite{AMS25}. We now proceed to present the formal definition of signed partitions.

\begin{Def}
A {\it signed partition} $\si$ of $n$ is a pair $(\pi, \nu)$ of partitions such that $|\si|=|\pi|-|\nu|=n$. The members of $\pi$ (resp. $\nu$) are the {\it positive parts} (resp. {\it negative parts}) of the signed partition $\si$. Define $\ell^{+}(\si)=\ell(\pi)$ (resp. $\ell^{-}(\si)=\ell(\nu)$) to be the number of positive (resp. negative) parts in $\si$. For example, $\si=((4, 4, 2, 1), (2, 1, 1))$ is a signed partition of $11-4=7$ and $\ell^+(\si)=4$ and $\ell^-(\si)=3$.
\end{Def}

We require some further notation. Let $\ell_e(\la)$ denote the number of even parts in $\la\in \sP_{ed}(n)$. Let $\ell_1(\la)$ and $\overline{\ell}_o(\la)$ denote, respectively, the number of parts equal to $1$, and the number of odd parts other than $1$ together with $\overline{1}$ (if it exists) in $\la\in \sF(n)$. With these preliminaries in place, we may state our first main result.

\begin{theorem}\label{thm:res1}
Let $n$, $m$ and $k$ be nonnegative integers. Let $\sF_{-1}(n)$ denote the set of signed partitions $\si$ of $n$ where positive parts are even and distinct, and negative parts are odd, distinct and at most $2\ell^+(\si)-1$. Further, we define three partition sets as follows.
\begin{align*}
\begin{array}{c|c}
\hline\\[-6pt]
\text{Set of Partitions} & \text{Corresponding Cardinality}\\[6pt]
\hline\\[-6pt]
\sP_{ed}(n, m, k):=\{\la\in \sP_{ed}(n): \ell(\la)=m, \ell_e(\la)=k\} & \ped(n, m, k)=|\sP_{ed}(n, m, k)|\\[6pt]
\hline\\[-6pt]
\sF(n, m, k):=\{\la\in \sF(n): \ell_1(\la)=m, \overline{\ell}_o(\la)=k\}& F(n, m, k)=|\sF(n, m, k)|\\[6pt]
\hline\\[-6pt]
\sF_{-1}(n, m, k):=\{\si\in \sF_{-1}(n): \ell^+(\si)=m,\ell^+(\si)-\ell^-(\si)=k\}& F_{-1}(n, m, k)=|\sF_{-1}(n, m, k)|\\[6pt]
\hline
\end{array}
\end{align*}
Then we have
\begin{align}
\ped(n, m, k)=F(n, m, k)=F_{-1}(n, m, k).
\end{align}
\end{theorem}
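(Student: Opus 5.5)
The plan is to prove both equalities by going through the middle set $\sF_{-1}(n,m,k)$. I will first establish $\ped(n,m,k)=F_{-1}(n,m,k)$ and then $F(n,m,k)=F_{-1}(n,m,k)$, each by an explicit bijection. Alongside each bijection I will check the corresponding trivariate generating function, which serves as a safety net.

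On the $q$-series side, $\sum \ped(n,m,k)z^my^kq^n=(-yzq^2;q^2)_\infty/(zq;q^2)_\infty$. For $\sF_{-1}$, write the positive parts as $2b_1>\cdots>2b_m$ and set $b_j=c_j+(m-j+1)$ with $c_1\ge\cdots\ge c_m\ge0$. This gives the factor $q^{m(m+1)}/(q^2;q^2)_m$. The negative parts form a subset $S\subseteq\{1,3,\dots,2m-1\}$ of size $m-k$, so the generating function of $\sF_{-1}$ is
\begin{align*}
\sum_{m\ge0} z^m\frac{q^{m(m+1)}}{(q^2;q^2)_m}\prod_{j=1}^{m}\bigl(y+q^{-(2j-1)}\bigr).
\end{align*}
The product equals $q^{-m^2}\prod_{j=1}^m(1+yq^{2j-1})$. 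Hence the coefficient of $z^m$ is $q^m(-yq;q^2)_m/(q^2;q^2)_m$. The $q$-binomial theorem then gives $\sum_m z^mq^m(-yq;q^2)_m/(q^2;q^2)_m=(-yzq^2;q^2)_\infty/(zq;q^2)_\infty$, which proves the first equality analytically. In the bijective version I keep the variables in this computation aligned with the combinatorics: $m$ is the total length on both sides, and $k$ is the number of even parts of $\la$ versus the number of indices $j$ for which $2j-1$ is \emph{not} a negative part.

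For the bijection $\sP_{ed}(n,m,k)\to\sF_{-1}(n,m,k)$, I will mirror this factorization. Write each odd part of $\la$ as $2a+1$, and add $1$ to it to get the even part $2a+2$. Then merge these $m-k$ even values (which may repeat) with the $k$ distinct even parts into one list of $m$ even numbers. Finally, add the staircase $2(m-j)$ to position $j$ to make the list strictly decreasing. The positions occupied by former odd parts, with a tie-breaking convention that puts odd-derived entries after even ones of equal value, determine which $2j-1$ become negative parts. The bookkeeping has to come out exactly as in the identity $\prod_j(y+q^{-(2j-1)})=q^{-m^2}\prod_j(1+yq^{2j-1})$. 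Concretely, each index $j$ either contributes $y$ (the $j$-th entry came from a distinct even part) or subtracts $2j-1$ (it came from an odd part), and the staircase contributes $m^2$ in total. The main obstacle is making the weights balance with this tie-breaking rule: the repeated odd parts must be absorbed by the staircase while the distinct even parts must not collide. I will first try the order "even before odd at equal value" and verify invertibility by reading $S$ back off.

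For $\sF(n,m,k)\leftrightarrow\sF_{-1}(n,m,k)$, conjugation is natural. A partition in $\sF$ with $m$ ones has all parts at most $2m$, so its conjugate has its first $m$ columns of length at least the number of parts. I plan to pair the $m$ ones with $m$ positive even parts $2b_j$: the columns of the non-$1$ parts, together with the ones, rebuild a strictly decreasing sequence of even numbers. The distinct odd parts $2j-1\le 2m-1$ (including $\overline 1$, counted as $1$) are recorded through their complementary set, so that the parts absent from $\{\overline1,3,\dots,2m-1\}$ become the negative parts. This matches $\overline\ell_o=k=\ell^+-\ell^-$. I expect this second map to be routine once the first one is set up. A generating-function check along the same lines ($\prod_{j}(1+yq^{2j-1})$ for the odd parts and $1/(q^2;q^2)_m$ for the even parts at most $2m$) confirms the count.
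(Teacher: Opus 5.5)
Your generating-function computation is correct. It is the paper's analytic proof run in reverse, and on its own it already proves both equalities. The problem is in the bijective part.

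\textbf{The gap: the map $\sP_{ed}(n,m,k)\to\sF_{-1}(n,m,k)$ is not weight-preserving, and no tie-breaking rule can fix it.} Your convention ``even before odd at equal value'' simply reproduces the order of the parts of $\la$. So the set $O$ of odd-derived positions is just the set of positions $i_1<\cdots<i_{m-k}$ of the odd parts of $\la$.

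After adding $1$ to each odd part and the uniform staircase $2(m-j)$, the positive parts sum to $|\la|+(m-k)+m(m-1)=|\la|+m^2-k$. You would therefore need $\sum_{j\in O}(2j-1)=m^2-k$ with $|O|=m-k$. But that sum is at most $\sum_{j=k+1}^{m}(2j-1)=m^2-k^2$. Hence the map fails for every $\la$ with $k\ge 2$, and for $k=1$ unless the even part is $\la_1$.

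For instance, $\la=(3,2)$ gives $(4,2)\mapsto(6,2)$ with negative part $1$. That is a signed partition of $7$, not $5$. In general the excess is $2\sum_{p}(p-1)$, summed over the positions $p$ of the even parts of $\la$: the staircase should step only across odd parts.

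\textbf{The repair (this is the paper's map $f_2$).} Add to $\la_j$ twice the number of odd parts strictly after position $j$, plus $1$ if $\la_j$ is odd. Take $\nu=(2i_{m-k}-1)+\cdots+(2i_1-1)$. The total amount added is $(m-k)+2\sum_t(i_t-1)=\sum_t(2i_t-1)=|\nu|$, so weight is preserved. Checking the four parity cases for consecutive parts shows the new parts are even and differ by at least $2$. The inverse reads the $i_t$ off $\nu$ and subtracts the shifts.

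The identity $\prod_j\bigl(y+q^{-(2j-1)}\bigr)=q^{-m^2}\prod_j(1+yq^{2j-1})$ that you tried to mirror is the complementation linking $\sF_{-1}$ to $\sF$, not to $\sP_{ed}$. The passage to $\sP_{ed}$ is the $q$-binomial step, which is why the shift has to depend on where the odd parts sit.

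\textbf{The rest is sound.} Your $\sF\leftrightarrow\sF_{-1}$ idea works once made precise:
\begin{itemize}
\item 2-modularly conjugate the even parts $\le 2m$ into columns $c_1\ge\cdots\ge c_m$;
\item take positive parts $c_j+2(m-j+1)$;
\item take as negative parts the complement of the odd set in $\{1,3,\dots,2m-1\}$.
\end{itemize}
The weights match because $m(m+1)=m+m^2$. This is a genuinely different route from the paper's direct map $f_1:\sP_{ed}\to\sF$.
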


Another concept closely related to partitions with distinct even parts is the Lebesgue identity:
\begin{align}\label{id:lebesgue}
\sum_{n\geq 0}\frac{(-xq; q)_n}{(q; q)_n}q^{\binom{n+1}{2}}=(-xq^2; q^2)_{\infty}(-q; q)_{\infty}.
\end{align}
Let $\sD$ be the set of partitions into distinct parts. On the side of the series, one can consider the partition set $\sV(n, k)$ consisting of partition pairs $(\al, \beta)$ where $\al\in \sD$ and $\beta\in \sD$ satisfying that $|\al|+|\beta|=n$, $\ell(\beta)=k$ and $\max(\beta)\leq \ell(\al)$. Note that $\max(\la)$ is the largest part of a partition $\la$. On the side of the product, using Euler's theorem (that is, $1/(q; q^2)_{\infty}=(-q; q)_{\infty}$), this is the generating function for partitions $\la\in \sP_{ed}(n, k)=\bigcup_{m\geq 0}\sP_{ed}(n, m, k)$. Bessenrodt~\cite{bes94} gave a bijection between $\sV(n, k)$ and $\sP_{ed}(n, k)$ in term of $2$-modular diagrams. For more combinatorial proofs of the Lebesgue identity and its extensions, see \cite{AG93, CHS10, fu08, LS09, RV72, row10}. Our second main result is to present two new combinatorial perspectives on the series side of the Lebesgue identity, one of which is related to signed partitions. We state the result as follows.

\begin{theorem}\label{thm:res2}
Let $n$ and $k$ be nonnegative integers. Let $\sA(n)$ denote the set of weighted partitions $\la\in \sD$ of $n$ where if $\la_i$ is labeled as $x$ then $\la_i-\la_{i+1}\geq 2$ (we adopt the convention that $\la_{n+1}=0$ so that $1$ will not be labeled as $x$). Let $\sA_{-1}(n)$ denote the set of signed partitions $\si$ of $n$ where positive parts differ by at least $2$ and the smallest positive part is at least $2$, and negative parts are distinct and at most $\ell^{+}(\si)$. Further, letting $\ell_x(\la)$ be the number of parts labeled as $x$ in $\la\in \sA(n)$, we define three partition sets as follows.
\begin{align*}
\begin{array}{c|c}
\hline\\[-6pt]
\text{Set of Partitions} & \text{Corresponding Cardinality}\\[6pt]
\hline\\[-6pt]
\sV(n, k)\text{ is as defined above} & V(n, k)=|\sV(n, k)|\\[6pt]
\hline\\[-6pt]
\sA(n, k):=\{\la\in \sA(n): \ell_x(\la)=k\}& A(n, k)=|\sA(n, k)|\\[6pt]
\hline\\[-6pt]
\sA_{-1}(n, k):=\{\si\in \sA_{-1}(n): \ell^+(\si)-\ell^-(\si)=k\}& A_{-1}(n, k)=|\sA_{-1}(n, k)|\\[6pt]
\hline
\end{array}
\end{align*}
Then we have
\begin{align}
V(n, k)=A(n, k)=A_{-1}(n, k).
\end{align}
\end{theorem}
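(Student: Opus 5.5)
We prove $V(n,k)=A(n,k)$ and $A(n,k)=A_{-1}(n,k)$ by two explicit weight-preserving bijections. In both, a partition $\la\in\sA(n,k)$ with $m$ parts is read through index positions. The natural tool is the "staircase" decomposition of a partition into distinct parts, together with conjugation.

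\emph{From $\sA$ to $\sA_{-1}$.} Let $\si=(\pi,\nu)\in\sA_{-1}(n,k)$ with $\ell^+(\si)=m$ and $\ell^-(\si)=j=m-k$. The negative parts $\nu_1>\dots>\nu_j$ are distinct and at most $m$. For $1\le i\le m$ set $c_i=\#\{t:\nu_t\ge i\}$ (the conjugate of $\nu$), so that $\sum_i c_i=|\nu|$. Since the parts of $\nu$ are distinct, the sequence $c_i$ is nonincreasing and $c_i-c_{i+1}\in\{0,1\}$, with $c_{m+1}=0$. Moreover, $c_i-c_{i+1}=1$ exactly when $i$ is a part of $\nu$.

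Define $\la_i=\pi_i-c_i$ for $1\le i\le m$. This partition has weight $|\pi|-|\nu|=n$. We check its properties as follows.
\begin{itemize}
\item The gaps satisfy $\la_i-\la_{i+1}=(\pi_i-\pi_{i+1})-(c_i-c_{i+1})\ge 2-1=1$, so the parts are distinct.
\item Also $\la_m=\pi_m-c_m\ge 2-1=1$, so all parts are positive.
\item If $i\notin\nu$, then $c_i=c_{i+1}$, so $\la_i-\la_{i+1}\ge 2$; for $i=m$ (with $\la_{m+1}=0$) this reads $\la_m=\pi_m\ge 2$.
\end{itemize}
We therefore label $\la_i$ by $x$ exactly when $i\notin\nu$. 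This yields an element of $\sA(n)$ with $m-j=k$ labels.

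Conversely, given $\la\in\sA(n,k)$ with $m$ parts, let $\nu$ be the set of indices $i$ whose part $\la_i$ is \emph{unlabeled}. Then $\nu$ consists of distinct parts, each at most $m$. Define $c_i$ from $\nu$ as above and put $\pi_i=\la_i+c_i$; we check the constraints on $\pi$.
\begin{itemize}
\item If $i\in\nu$ (unlabeled), then $\pi_i-\pi_{i+1}=\la_i-\la_{i+1}+1\ge 2$, because the parts of $\la$ are distinct.
\item If $i\notin\nu$ (labeled), then $\pi_i-\pi_{i+1}=\la_i-\la_{i+1}\ge 2$ by the labeling rule.
\item For the smallest part, $\pi_m\ge 2$ in both cases.
\end{itemize}
So $(\pi,\nu)\in\sA_{-1}(n,k)$, and the two maps are clearly mutually inverse.

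\emph{From $\sA$ to $\sV$.} Write $\la\in\sA(n)$ with $m$ parts as $\la_i=\mu_i+(m-i+1)$, where $\mu$ is a partition with at most $m$ parts. Then $\la_i-\la_{i+1}\ge 2$ iff $\mu_i>\mu_{i+1}$ (again with $\mu_{m+1}=0$). Passing to the conjugate $\mu'$, whose parts are at most $m$, the condition $\mu_i>\mu_{i+1}$ says precisely that $i$ occurs as a part of $\mu'$. A labeling is therefore the same as a choice of a set $\beta$ of distinct part sizes occurring in $\mu'$, with $|\beta|=\ell_x(\la)=k$.

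Remove one copy of each size in $\beta$ from $\mu'$, obtaining a partition $\rho$ with parts at most $m$. Then set $\al=(m,m-1,\dots,1)+\rho'$, which has exactly $m$ distinct parts. Now $\beta\in\sD$ has $k$ parts with $\max(\beta)\le m=\ell(\al)$, and the weights add up correctly: $|\al|+|\beta|=\binom{m+1}{2}+|\mu'|=|\la|$. Hence $(\al,\beta)\in\sV(n,k)$.

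Each step is reversible. From $(\al,\beta)$ we recover $m=\ell(\al)$ and $\rho$, add the parts of $\beta$ back into $\rho'{}'=\rho$ to recover $\mu'$, and mark the positions of $\beta$ as labels. This is the combinatorial counterpart of the factor $\prod_{j=1}^m (1+xq^j)/(1-q^j)$: for each size $j\le m$, a nonempty multiplicity of $j$ in $\mu'$ may optionally carry a label.

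\emph{Main obstacle.} The delicate point is the boundary conventions. The convention $\la_{m+1}=0$ must correspond exactly to the conditions "smallest positive part $\ge 2$" and "negative parts $\le\ell^+$" in $\sA_{-1}$, and to the condition $\max(\beta)\le\ell(\al)$ in $\sV$. I would verify these boundary cases carefully, especially whether the index $i=m$ is labeled.

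A naive termwise comparison of generating functions over fixed $m$ does not match for $\sA_{-1}$: the exponents differ by $ij$ with $i=m-j$. This is why I would rely on the index-wise subtraction bijection above rather than on a $q$-series manipulation.
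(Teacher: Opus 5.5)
Your proof is correct and essentially the paper's. The map $\pi_i=\la_i+c_i$, with $\nu$ the set of unlabeled positions, is exactly the paper's $g_2$. Your staircase--conjugation route to $\sV$ unwinds, via $\mu_i=\rho'_i+\#\{b\in\beta:b\ge i\}$, to $\la_i=\al_i+\#\{b\in\beta:b\ge i\}$ with $x$ placed on the positions in $\beta$; this is the bijection $g_1$ the paper reads off from the Lebesgue series.

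Your closing remark is wrong, though. For fixed $m$ one has $q^{\binom{m+1}{2}}\prod_{t=1}^m(1+xq^t)=x^mq^{m(m+1)}\prod_{t=1}^m(1+x^{-1}q^{-t})$, which is exactly the paper's analytic proof. This termwise match is forced anyway, since your own bijection preserves $m$. The spurious $j(m-j)$ comes from dropping the factor in $\binom{m}{j}_{q^{-1}}=q^{-j(m-j)}\binom{m}{j}_q$.
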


Recently, K\i l\i \c c and Kur\c sung\"oz~\cite{KK25} developed a search algorithm for systems of $q$-difference equations satisfied by Andrews-Gordon type double series and explained some of the double series in a base partition and moves framework. They considered the set $\sB(n)$ of bicolored partitions of $n$ such that
\begin{enumerate}[(a)]
\item Neither blue nor red parts repeat.

\item When we go through parts from the smallest to the largest, for each red $c_r$, there must be a blue $c_b$ or a blue $(c+1)_b$. Moreover, these blue parts should be different for each red $c_r$.
\end{enumerate}
For example, the four bicolored partitions in $\sB(4)$ are 
\begin{align*}
4_b,\ 3_b+1_b,\ 2_b+2_r,\ 2_b+1_b+1_r.
\end{align*}
Let $\ell_r(\la)$ be the number of red parts in $\la\in \sB(n)$. Therefore, we may state our final result as follows, which partially answers the combinatorial problem K\i l\i \c c and Kur\c sung\"oz posed at the end of their paper.

\begin{theorem}\label{thm:res3}
Let $n$ and $k$ be nonnegative integers and 
\begin{align*}
\sB(n, k):=\{\la\in \sB(n): \ell_r(\la)=k\},\quad B(n, k)=|\sB(n, k)|.
\end{align*}
Then we have
\begin{align*}
\ped(n, k)=B(n, k).
\end{align*}
\end{theorem}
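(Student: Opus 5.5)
By Theorem~\ref{thm:res2}, together with the Lebesgue identity \eqref{id:lebesgue} and Euler's theorem, the product side of \eqref{id:lebesgue} is the generating function of $\ped(n,k)$. So I plan to reduce the statement to $\ped(n,k)=A(n,k)$ (equivalently $V(n,k)$) and then prove $A(n,k)=B(n,k)$ by a weight-preserving bijection $\Phi:\sA(n,k)\to\sB(n,k)$. The bijection should send each $x$-labeled part to exactly one red part, so that $\ell_x$ corresponds to $\ell_r$. Composing with the bijections from Theorem~\ref{thm:res2} then gives the claim, and also a bijective proof.

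First I would make the matching in condition (b) canonical. Given $\la\in\sB(n)$, scan the red parts from smallest to largest. Match each red $c_r$ to the blue $c_b$ if it exists and is still unused, and otherwise to $(c+1)_b$. A standard exchange argument should show that whenever an admissible matching exists, this greedy one succeeds. Each red part then becomes a pair $\{c_r,c_b\}$ of weight $2c$ or a pair $\{c_r,(c+1)_b\}$ of weight $2c+1$, and the unmatched blue parts form a distinct partition.

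The candidate for $\Phi^{-1}$ merges each matched pair into a single labeled part of weight $2c$ or $2c+1$, keeps the unmatched blue parts unlabeled, and then corrects collisions. Going forward, a labeled part $p$ of $\la\in\sA$ is split into $\lceil p/2\rceil_b$ and $\lfloor p/2\rfloor_r$. Distinctness can fail here: a split may create a blue part equal to an existing unlabeled part, or two splits may produce equal red or blue parts. The gap condition $\la_i-\la_{i+1}\ge 2$ on labeled parts is what must control this.

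I expect this collision repair to be the main obstacle. My first attempt would be a "shift" rule. Process the labeled parts from the largest down. After splitting a labeled part, subtract $1$ from each unlabeled part lying below it and add the same total to its blue half (or, symmetrically, add $1$ to each part above it), so that blue parts stay distinct. This mirrors the $2$-modular-diagram argument of Bessenrodt~\cite{bes94}. Invertibility would then rest on the greedy matching recovering exactly which blue part was paired with each red part. If the direct rule becomes too messy, my fallback is to compare generating functions. Conditioning on the largest part should show that $\sum_{n,k}B(n,k)x^kq^n$ satisfies the same $q$-difference equation in $x$ (under $x\mapsto xq^2$) as $(-xq^2;q^2)_\infty(-q;q)_\infty$, with matching initial conditions. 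That would already prove the identity, and the recursion would guide the bijection.
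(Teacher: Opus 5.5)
There is a genuine gap. The reduction to $A(n,k)=B(n,k)$ via the Lebesgue identity and Euler's theorem is correct, and so is the greedy normal form for condition (b). But the actual map $\Phi$ is never defined: you leave the collision repair open, and the shift rule you sketch already fails for $n=6$, $k=1$.

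Here $\sA(6,1)=\{6_x,\ 5_x+1,\ 4_x+2,\ 4+2_x\}$ and $\sB(6,1)=\{3_b+3_r,\ 3_b+2_r+1_b,\ 4_b+1_b+1_r,\ 3_b+2_b+1_r\}$. The plain split $p\mapsto\lceil p/2\rceil_b+\lfloor p/2\rfloor_r$ sends $6_x$, $5_x+1$, $4+2_x$ to the first three elements, but sends $4_x+2$ to $2_b+2_r+2_b$. Your shift rule does not repair this under any reading:
\begin{itemize}
\item If you shift only on collision, $4_x+2$ becomes $3_b+2_r+1_b$, which is already the image of $5_x+1$.
\item If you always shift, $5_x+1$ becomes $4_b+2_r$: the part $1$ disappears and (b) is violated.
\item The variant ``add $1$ to each part above'' does nothing, since $4$ is the largest part.
\end{itemize}
The only admissible image, $3_b+2_b+1_r$, arises by moving the pair \emph{below} the unlabeled $2$, with the pair losing $1$ and the $2$ gaining $1$. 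That is the opposite direction to your rule. Even with a correct general rule, you would still have to prove that the greedy matching on the output recovers exactly the pairs you created; that is where invertibility lives.

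The fallback does not close the gap either. You derive no recursion for $G(x)=\sum_{n,k}B(n,k)x^kq^n$. Proving $G(x)=(1+xq^2)G(xq^2)$ together with $G(0)=(-q;q)_\infty$ is precisely the theorem in analytic form. Nothing in your sketch explains how conditioning on the largest part would yield a functional equation under $x\mapsto xq^2$.

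The paper supplies the missing rule, working from the product side rather than through $\sA$. By Euler's theorem it uses $\sC(n,k)$: pairs $(\alpha,\beta)$ with $\alpha$ into distinct parts and $\beta$ into $k$ distinct even parts. The map has three steps:
\begin{enumerate}
\item Delete the parts of $\alpha$ that are at most $k$; each deleted part $s$ adds $1$ to the $s$ largest parts of $\beta$, giving $\beta'$.
\item Split each $\beta'_j$, exactly as you propose, into $\lceil\beta'_j/2\rceil$ and $\lfloor\beta'_j/2\rfloor_r$.
\item Append each pair at the bottom of $\alpha'$ and move it upward by the weight-preserving forward adjustments $a+c+c_r\to(c+1)+c_r+(a-1)$ and $a+c+(c-1)_r\to c+c_r+(a-1)$. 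Backward adjustments give the inverse.
\end{enumerate}
In the example above, $(3+1,2)$ gives $\beta'=3$ and then $3+2+1_r$, which is the image your rule misses. The pair $(2,4)$ gives $2+2+2_r\to 3+2_r+1$, which resolves exactly the collision you ran into. To salvage your route you would need an equally explicit transfer rule on $\sA$, together with a proof that it is invertible.
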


We organize the rest of the paper as follows. In Sections \ref{sec:res1}, \ref{sec:res2} and \ref{sec:res3}, we present proofs (that are analytic, combinatorial, or both) for Theorems \ref{thm:res1}, \ref{thm:res2} and \ref{thm:res3}, respectively. The bijections that we will construct in Sections \ref{sec:res1} and \ref{sec:res3} partially answer the combinatorial problems posed by Andrews-El Bachraoui and K\i l\i \c c-Kur\c sung\"oz, respectively. 

\section{Proof of Theorem \ref{thm:res1}}\label{sec:res1}

In this section, we shall present both an analytic proof and a combinatorial proof of Theorem \ref{thm:res1}. We begin with the analytic proof, for which we require a prerequisite $q$-series identity, namely the $q$-binomial theorem~\cite[Eq. (II.3)]{GR90}:
\begin{align}\label{id:qbino}
\sum_{n\geq 0}\frac{(a; q)_nz^n}{(q; q)_n}=\frac{(az; q)_{\infty}}{(z; q)_{\infty}}.
\end{align}

\begin{proof}[Analytic proof of Theorem \ref{thm:res1}]
Let $q\ri q^2$, $a\ri -yq$ and $z\ri xq$ in equation \eqref{id:qbino}, then
\begin{align}\label{id:qbino-xy}
\sum_{n\geq 0}\frac{(-yq; q^2)_nx^nq^n}{(q^2; q^2)_{n}}=\frac{(-xyq^2; q^2)_{\infty}}{(xq; q^2)_{\infty}}.
\end{align}
Now we have
\begin{align*}
\sum_{n, m, k\geq 0}\ped(n, m, k)x^my^kq^n&=\sum_{n, m, k\geq 0}\sum_{\la\in \sP_{ed}(n, m, k)}x^{\ell(\la)}y^{\ell_e(\la)}q^{|\la|}\\
&=\frac{(-xyq^2; q^2)_{\infty}}{(xq; q^2)_{\infty}}\\
&=\sum_{n\geq 0}\frac{(-yq; q^2)_nx^nq^n}{(q^2; q^2)_{n}}\quad\quad \text{(using equation \eqref{id:qbino-xy})}\\
&=\sum_{n\geq 0}\frac{(1+yq)(1+yq^3)\cdots (1+yq^{2n-1})}{(1-q^2)(1-q^4)\cdots (1-q^{2n})}(xq)^{\overbrace{1+1+\cdots +1}^{\text{n times}}}\\
&=\sum_{n, m, k\geq 0}\sum_{\la\in \sF(n, m, k)}x^{\ell_1(\la)}y^{\overline{\ell}_o(\la)}q^{|\la|}\\
&=\sum_{n, m, k\geq 0}F(n, m, k)x^my^kq^n.
\end{align*}
In fact, this also explains why the first occurrence of $1$ in a partition $\la\in \sF(n, m, k)$ may be overlined. Since $\overline{1}$ arises from $(1+yq)$ at most once and is tracked by $y$, it is distinguished from the $1$'s coming from $(xq)^n$ which are tracked by $x$.

On the other hand, we have
\begin{align*}
\sum_{n, m, k\geq 0}F(n, m, k)x^my^kq^n&=\sum_{n\geq 0}\frac{(1+yq)(1+yq^3)\cdots (1+yq^{2n-1})}{(1-q^2)(1-q^4)\cdots (1-q^{2n})}(xq)^{n}\\
&=\sum_{n\geq 0}\frac{(\frac{1}{yq}+1)(\frac{1}{yq^3}+1)\cdots (\frac{1}{yq^{2n-1}}+1)}{(1-q^2)(1-q^4)\cdots (1-q^{2n})}x^{n}y^nq^{n^2+n}\\
&=\sum_{n\geq 0}\frac{(xy)^nq^{2+4+\cdots +(2n)}}{(1-q^2)\cdots (1-q^{2n})}\times \left[(\frac{1}{yq}+1)(\frac{1}{yq^3}+1)\cdots (\frac{1}{yq^{2n-1}}+1)\right]\\
&=\sum_{n, m, k\geq 0}\sum_{\si=(\pi, \nu)\in \sF_{-1}(n, m, k)}\left[(xy)^{\ell^+(\si)}q^{|\pi|}\right]\times \left[\frac{1}{y^{\ell^-(\si)}q^{|\nu|}}\right]\\
&=\sum_{n, m, k\geq 0}\sum_{\si\in \sF_{-1}(n, m, k)}x^{\ell^+(\si)}y^{\ell^+(\si)-\ell^-(\si)}q^{|\si|}\\
&=\sum_{n, m, k\geq 0}F_{-1}(n, m, k)x^my^kq^n.
\end{align*}
Thus, the proof is complete.
\end{proof}

The next part of this section is to provide a combinatorial proof of Theorem \ref{thm:res1}, which we will accomplish by constructing two bijections. The first bijection is between $\sP_{ed}(n, m, k)$ and $\sF(n, m, k)$, which we present as a lemma below.

\begin{lemma}\label{lem:f1}
For any nonnegative integers $n$, $m$ and $k$, there exists a bijection
\begin{align*}
f_1: \sP_{ed}(n, m, k)& \ri \sF(n, m, k)\\
\la & \mapsto \mu
\end{align*}
such that $|\la|=|\mu|$, $\ell(\la)=\ell_1(\mu)$ and $\ell_e(\la)=\overline{\ell}_o(\mu)$.
\end{lemma}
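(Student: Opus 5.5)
The plan is to build $f_1$ from a $2$-modular decomposition of $\la$ followed by a conjugation. The construction is guided by the factorization of the generating function used in the analytic proof of Theorem~\ref{thm:res1}. A partition $\mu\in\sF(n,m,k)$ consists of three pieces:
\begin{itemize}
\item exactly $m$ ordinary $1$'s;
\item a set of $k$ distinct odd numbers from $\{\overline{1},3,5,\dots,2m-1\}$, where $\overline{1}$ plays the role of ``$2\cdot 1-1$'';
\item an arbitrary multiset of even parts, each at most $2m$.
\end{itemize}
So the task is to encode $\la\in\sP_{ed}(n,m,k)$ as such a triple of data.

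Write $\la=(\la_1\ge\cdots\ge\la_m)$ and set $\la_i=2c_i+1$ if $\la_i$ is odd and $\la_i=2c_i+2$ if $\la_i$ is even. Let $E=\{i:\la_i \text{ even}\}$, so that $|E|=k$. The sequence $c_1\ge\cdots\ge c_m\ge 0$ is nonincreasing.

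The key observation is that $c_{i-1}\ge c_i+1$ whenever $i\in E$ and $i\ge 2$. Indeed, suppose $\la_{i-1}$ is even. Since even parts are distinct, $\la_{i-1}>\la_i$, which gives $c_{i-1}>c_i$. Suppose instead $\la_{i-1}$ is odd. Then $\la_{i-1}\ge 2c_i+2$ forces $\la_{i-1}\ge 2c_i+3$, so again $c_{i-1}\ge c_i+1$.

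Using this, define
\[
d_j=c_j-\#\{i\in E: i>j\}.
\]
Then $d_j-d_{j+1}=c_j-c_{j+1}-[\,j+1\in E\,]\ge 0$, and $d_m=c_m\ge 0$. So $d=(d_1,\dots,d_m)$ is a partition with at most $m$ parts.

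Let $d'$ be its conjugate, whose parts are all at most $m$. Define $\mu$ to consist of:
\begin{itemize}
\item $m$ copies of $1$;
\item the odd part $2i-1$ for each $i\in E$, written as $\overline{1}$ when $i=1$;
\item the even parts $2d'_1,2d'_2,\dots$, each of size at most $2m$.
\end{itemize}

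The weight is preserved:
\[
|\la|=m+k+2\sum_j c_j=m+k+2|d|+2\sum_{i\in E}(i-1)=m+\sum_{i\in E}(2i-1)+2|d'|.
\]
We also have $\ell_1(\mu)=m$ and $\overline{\ell}_o(\mu)=k$. The smallest part of $\mu$ is $1$ as soon as $m\ge1$, and the case $n=0$ is trivial.

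For the inverse, start from $\mu\in\sF(n,m,k)$. Read off $m$ as the number of ordinary $1$'s, and read off $E$ from the distinct odd parts $2i-1$, including $\overline{1}$ as $i=1$. Halve the even parts and conjugate to recover $d$, which has at most $m$ parts. Then set $c_j=d_j+\#\{i\in E:i>j\}$, and finally $\la_i=2c_i+1$ or $2c_i+2$ according as $i\notin E$ or $i\in E$.

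The main obstacle is showing that this output really lies in $\sP_{ed}(n,m,k)$. The sequence $c$ is nonincreasing with the strict descents $c_{i-1}>c_i$ at every $i\in E$ with $i\ge 2$, and these descents are exactly what make the parts $\la_i$ nonincreasing. In particular, they prevent an even part $2c_i+2$ from exceeding the part directly above it, and they keep the even parts distinct. The two maps are then clearly mutually inverse, and $f_1$ is the required bijection.
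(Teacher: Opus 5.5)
Your construction is correct and is exactly the paper's map $f_1$: your $2d_j=\la_j-1-[j\in E]-2\#\{i\in E:i>j\}$ is the paper's $\la''_j$ after Steps 1--2, with the removed pieces giving $\pi=\sum_{i\in E}(2i-1)$ and the $m$ ones, and your $2d'$ is the $2$-modular conjugate $\la'''$ of Step 3. Your explicit check that $c_{i-1}\ge c_i+1$ for $i\in E$ (and the corresponding check for the inverse) supplies the justification that the paper leaves implicit in ``$\la'$ is well-defined.''
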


\begin{proof}
First, we describe the map $f_1$ from $\sP_{ed}(n, m, k)$ to $\sF(n, m, k)$. Given a partition $\la=\la_1+\la_2+\cdots +\la_m\in \sP_{ed}(n, m, k)$, we divide this map into the following three steps.
\begin{description}
\item[Step 1] Since $\ell_e(\la)=k$, let $1\leq i_1<i_2<\cdots <i_k\leq m$ be the positions of all even parts in $\la$. Then we can perform the following operation on each even part together with all the parts preceding it.
\begin{align*}
\la\ri \la'=\la_1'+\cdots +\la_m',\text{ where }\la_j'=\begin{cases}
\la_j-2k+2t & \text{ if }i_t<j<i_{t+1}\text{ for }0\leq t\leq k,\\
\la_j-2k+2t-1 & \text{ if }j=i_t\text{ for }1\leq t\leq k,
\end{cases}
\end{align*}
and we set $i_0=0$ and $i_{k+1}=m+1$. Note that $\la'$ is well-defined and is a partition into $m$ odd parts. From the parts that have been cut off, we obtain a new partition $$\pi=(2i_k-1)+(2i_{k-1}-1)+\cdots +(2i_1-1)$$ with $k$ distinct odd parts and the largest part not exceeding $2m-1$. We can readily verify that
\begin{align*}
|\la'|+|\pi|&=|\la|+\sum_{t=0}^{k-1}(-2k+2t)(i_{t+1}-i_t-1)+\sum_{t=1}^k(-2k+2t-1)+\sum_{t=1}^k(2i_t-1)\\
&=|\la|+\sum_{t=1}^k(-2i_t)+k(k+1)+(-2k^2+k^2)+\sum_{t=1}^k(2i_t)-k\\
&=|\la|.
\end{align*}

\item[Step 2] In this step, we will separate a $1$ from each part of $\la'$. Without loss of generality, assume that there exists some $1\leq t\leq m$ such that all parts after $\la_t'$ in $\la'$ are equal to $1$, i.e.,
\begin{align*}
\la_1'\geq\la_2'\geq \cdots \geq \la_t'>\la_{t+1}'=\cdots =\la_m'=1.
\end{align*}
Then we can perform the following operations:
\begin{align*}
\la'\ri \la''\oplus (\underbrace{1+\cdots +1}_{\text{m times}})=(\la_1''+\cdots +\la_t'')\oplus (\underbrace{1+\cdots +1}_{\text{m times}})
\end{align*}
where $\la_j''=\la_j'-1$ for all $1\leq j\leq t$. Note that $\la''$ is well-defined and is a partition into $t$ even parts.

\item[Step 3] We perform the following decomposition and reconstruction on $\la''$:
\begin{align*}
\la''\ri \la'''=\bigoplus_{s=1}^t(\underbrace{(2s)+\cdots +(2s)}_{\text{$(\la_s''-\la_{s+1}'')/2$ times}})
\end{align*}
where we set $\la_{t+1}''=0$. Note that $|\la'''|=\sum_{s=1}^t2s(\la_s''-\la_{s+1}'')/2=|\la''|$.
\end{description}
Finally, we obtain $$\mu=\la'''\oplus\pi\oplus(\underbrace{1+\cdots +1}_{\text{m times}})$$ where if $2i_1-1=1$ in $\pi$ then we need to overline it. Through the construction process described above, we can clearly see that $\mu\in \sF(n, m, k)$. On the other hand, the inverse map $f_1^{-1}$ can be obtained by reversing the above construction step by step, thus completing the proof.
\end{proof}



Next, we present the second bijection, which is between $\sP_{ed}(n, m, k)$ and $\sF_{-1}(n, m, k)$.

\begin{lemma}\label{lem:f2}
For any nonnegative integers $n$, $m$ and $k$, there exists a bijection
\begin{align*}
f_2: \sP_{ed}(n, m, k)&\ri \sF_{-1}(n, m, k)\\
\la & \mapsto \si
\end{align*}
such that $|\la|=|\si|$, $\ell(\la)=\ell^+(\si)$ and $\ell_e(\la)=\ell^+(\si)-\ell^-(\si)$.
\end{lemma}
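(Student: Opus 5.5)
I will not build $f_2$ from scratch. Instead I plan to write $f_2=g\circ f_1$, where $f_1$ is the bijection of Lemma \ref{lem:f1} and $g:\sF(n,m,k)\ri\sF_{-1}(n,m,k)$ is a bijection preserving weight, sending $\ell_1$ to $\ell^+$ and $\overline{\ell}_o$ to $\ell^+-\ell^-$. The choice of $g$ comes from the second half of the analytic proof. There one factors $(xq)^n\prod_{i=1}^n(1+yq^{2i-1})/(1-q^{2i})$ as $(xy)^nq^{2+4+\cdots+2n}/(q^2;q^2)_n$ times $\prod_{i=1}^n(1+y^{-1}q^{-(2i-1)})$. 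Combinatorially this amounts to two operations: trading the $n$ ones for a staircase of even parts, and replacing the chosen set of odd parts by its complement in $\{1,3,\dots,2n-1\}$.

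\textbf{Decomposing $\mu$.} Take $\mu\in\sF(n,m,k)$ and split it into three pieces:
\begin{itemize}
\item the $m$ non-overlined ones;
\item the set $S\subseteq\{1,3,\dots,2m-1\}$ of its other odd parts, where the overlined $\overline{1}$, if present, counts as the element $1\in S$, so that $|S|=k$;
\item the multiset $E$ of its even parts, all at most $2m$.
\end{itemize}
Because every part of $E$ is at most $2m$, I read $E$ $2$-modularly, exactly as in Step~3 of Lemma \ref{lem:f1} run backwards. This turns $E$ into a partition $\rho=\rho_1\geq\cdots\geq\rho_m\geq 0$ into at most $m$ even parts (zeros allowed) with $|\rho|=|E|$. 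Concretely, $\rho_j$ is twice the number of parts of $E$ that are at least $2j$.

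\textbf{Defining $g(\mu)$.} Set the positive parts to
\[
\pi_j=\rho_j+2(m-j+1),\qquad 1\le j\le m,
\]
which are $m$ distinct positive even parts. Set the negative parts to $\nu=\{1,3,\dots,2m-1\}\setminus S$, which are distinct odd parts at most $2m-1=2\ell^+-1$, with $\ell(\nu)=m-k$. The weight check is
\[
|\pi|-|\nu|=|E|+m(m+1)-\bigl(m^2-|S|\bigr)=|E|+m+|S|=|\mu|.
\]
Moreover $\ell^+=m$ and $\ell^+-\ell^-=k$, as required.

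\textbf{Inverting $g$.} Given $\si=(\pi,\nu)\in\sF_{-1}(n,m,k)$, recover $m=\ell(\pi)$ and subtract the staircase $2m,2m-2,\dots,2$ from $\pi$. This is possible because distinct even parts satisfy $\pi_j\ge 2(m-j+1)$, and it leaves an even partition $\rho$ with at most $m$ nonzero parts. Conjugate $\rho$ $2$-modularly to get even parts at most $2m$. Take $S=\{1,3,\dots,2m-1\}\setminus\nu$, overlining $1$ if $1\in S$, and append $m$ ones. Each step is invertible, so $g$ is a bijection, and then $f_2=g\circ f_1$ has all the required properties by Lemma \ref{lem:f1}.

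The main obstacle I expect is bookkeeping at the boundaries rather than anything conceptual.
\begin{itemize}
\item $\overline{1}$ must be treated as the element $1$ of $S$ and not as one of the $m$ ones. This is exactly what ensures $\ell^-=m-k$ and the bound $2\ell^+-1$.
\item $m=0$ forces $\mu=\ep$ and $\si=(\ep,\ep)$, which is consistent.
\item The zero entries of $\rho$ must be handled consistently, so that $\pi$ has exactly $m$ parts.
\end{itemize}
If a one-step description is preferred, I would finally unwind $g\circ f_1$ directly on $\la$. The cut-off odd parts $2i_t-1$ of Step~1 then become the complement of $\nu$, and the staircase added to $\la''$ turns Steps~2--3 into simply adding $2(m-j)+1$ to each odd-shifted part. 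I would check that the explicit formula agrees with the composite, but that check is not needed for the lemma.
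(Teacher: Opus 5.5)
Your proof is correct, but it is organized differently from the paper's. The paper defines $f_2$ in one step directly on $\la$, without passing through $\sF$. Letting $i_1<\cdots<i_{m-k}$ be the positions of the odd parts, it adds $2\cdot\#\{t: i_t>j\}$ to $\la_j$, plus $1$ more if $\la_j$ is odd, and it sets $\nu=\{2i_t-1\}$.

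You instead factor through $\sF(n,m,k)$. You build a bijection $g:\sF(n,m,k)\ri\sF_{-1}(n,m,k)$ from a staircase plus a complement in $\{1,3,\dots,2m-1\}$, which is a literal combinatorial version of the second half of the analytic proof. The paper has such a map only implicitly, as $f_2\circ f_1^{-1}$.

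The two constructions actually give the same map. With $\la'$ from Step~1 of $f_1$, your $\rho_j$ equals $\la'_j-1$, so $\pi_j=\la'_j+2(m-j)+1$. Write $m-j$ as the number of even-part positions after $j$ plus the number of odd-part positions after $j$. Then $\pi_j$ becomes exactly the paper's formula. Also, the complement of $\{2i-1:\la_i\text{ even}\}$ is the paper's $\nu$. Hence $g=f_2\circ f_1^{-1}$, and your factorization explains where the paper's formula comes from.

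The trade-offs are as follows.
\begin{itemize}
\item The paper's map is self-contained and does not depend on Lemma~\ref{lem:f1}. However, it leaves unstated the small adjacent-parts parity check showing that $\pi$ has distinct even parts, and it gives no explicit inverse.
\item In your version these properties are automatic: the staircase forces distinctness, and the complement forces $\nu\subseteq\{1,\dots,2m-1\}$ and $\ell(\nu)=m-k$. Your inverse is fully explicit. The cost is that you rely on $f_1$.
\end{itemize}

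One notational slip: you write $|S|=k$ for the cardinality but then use $|S|$ for the sum of the elements of $S$ in the weight check. These should be written differently.
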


\begin{proof}
For a given partition $\la\in \sP_{ed}(n, m, k)$, we know that it has $m-k$ odd parts. Then, without loss of generality, assume that $1\leq i_1< i_2< \cdots < i_{m-k}\leq m$ are the positions of all its odd parts. We can describe the operation of this map $f_2$ as follows:
\begin{align*}
\la\ri \pi=\pi_1+\cdots +\pi_m,\text{ where }\pi_j=\begin{cases}
\la_j+2(m-k-t) & \text{ if }i_t<j<i_{t+1}\text{ for }0\leq t\leq m-k,\\
\la_j+2(m-k-t)+1 & \text{ if }j=i_t\text{ for }1\leq t\leq m-k,
\end{cases}
\end{align*}
and we set $i_0=0$ and $i_{m-k+1}=m+1$. Moreover, we have
\begin{align*}
\nu=(2i_{m-k}-1)+(2i_{m-k-1}-1)+\cdots +(2i_1-1).
\end{align*} 
Note that $\nu$ is a partition into $m-k$ distinct odd parts and $\max(\nu)=2i_{m-k}-1\leq 2m-1$. Therefore, we obtain $\si=(\pi, \nu)\in \sF_{-1}(n, m, k)$. One can readily verify that 
\begin{align*}
|\si|&=|\pi|-|\nu|\\
&=|\la|+\sum_{t=0}^{m-k}2(m-k-t)(i_{t+1}-i_{t}-1)+\sum_{t=1}^{m-k}2(m-k-t)+(m-k)-\sum_{t=1}^{m-k}(2i_t)+(m-k)\\
&=|\la|+\sum_{t=1}^{m-k}(2i_{t})-(m-k+1)(m-k)+(m-k)(m-k-1)+(m-k)-\sum_{t=1}^{m-k}(2i_t)+(m-k)\\
&=|\la|.
\end{align*}
On the other hand, for the inverse map $f_2^{-1}$, we can reverse the above construction step by step, which will not be elaborated here. Thus, the proof is complete.
\end{proof}


\section{Proof of Theorem \ref{thm:res2}}\label{sec:res2}

Before presenting the proof of Theorem \ref{thm:res2}, let us first examine how the partition set $\sA(n, k)$ is derived from the series part of the Lebesgue identity. Recall the series and expand it:
\begin{align*}
\sum_{n\geq 0}\frac{(-xq; q)_n}{(q; q)_n}q^{\binom{n+1}{2}}=\sum_{n\geq 0}\frac{q^{1+2+\cdots +n}}{(q; q)_n}\times\left[(1+xq)(1+xq^2)\cdots (1+xq^n)\right].
\end{align*}
For the $n$-th summand in the summation, the factor $q^{\binom{n+1}{2}}/(q; q)_n$ generates all partitions with precisely $n$ distinct parts. The remaining factors create weights for each part. Namely, for $1\leq j\leq n$, the $j$-th largest part receives either a weight $1$ or $x$, and in the latter case all the previous $j$ parts get increased by $1$ in size to accommodate the extra weight of $q^j$. Consequently, we see that the summation generates the set of partitions $\sA(n, k)$. Based on this combinatorial interpretation, we can easily obtain the following bijection:

\begin{lemma}\label{lem:g1}
For any nonnegative integers $n$ and $k$, there exists a bijection
\begin{align*}
g_1:\sV(n, k)& \ri \sA(n, k)\\
(\al, \beta)&\mapsto\la
\end{align*}
such that $|\al|+|\beta|=|\la|$ and $\ell(\beta)=\ell_x(\la)$.
\end{lemma}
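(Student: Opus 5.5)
Given $(\al,\beta)\in\sV(n,k)$ with $\ell(\al)=N$, $\al_1>\cdots>\al_N\geq 1$, and $\beta=(\beta_1>\cdots>\beta_k)$ with $\beta_1\leq N$, I will build $\la$ by reading the parts of $\beta$ as the labels in the series interpretation just described. The $n$-th summand $q^{\binom{n+1}{2}}(1+xq)\cdots(1+xq^n)/(q;q)_n$ suggests the following rule. A part $\beta_s=j$ means that the $j$-th largest part of $\al$ receives the label $x$. It also means that each of the first $j$ parts of $\al$ is increased by $1$, which adds exactly $j$ to the weight. So I define
\begin{align*}
\la_i=\al_i+\#\{s:\beta_s\geq i\},\qquad 1\leq i\leq N,
\end{align*}
and I label $\la_j$ with $x$ if and only if $j$ is a part of $\beta$. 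Then $|\la|=|\al|+\sum_s\beta_s=|\al|+|\beta|$, $\ell(\la)=\ell(\al)$, and $\ell_x(\la)=\ell(\beta)=k$.

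The next step is to check that $\la\in\sA(n,k)$. Write $c_i=\#\{s:\beta_s\geq i\}$. This count is weakly decreasing in $i$, and $c_i-c_{i+1}$ equals $1$ if $i$ is a part of $\beta$ and $0$ otherwise. Hence
\begin{align*}
\la_i-\la_{i+1}=(\al_i-\al_{i+1})+(c_i-c_{i+1})\geq 1,
\end{align*}
so $\la$ has distinct parts. When $\la_i$ is labeled, $c_i-c_{i+1}=1$, which gives $\la_i-\la_{i+1}\geq 2$. For $i=N$ we use the convention $\la_{N+1}=\al_{N+1}=0$, together with $c_{N+1}=0$ because $\beta_1\leq N$. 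So a labeled last part satisfies $\la_N\geq 1+1=2$. This matches the convention that $1$ is never labeled.

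For the inverse, take a labeled $\la\in\sA(n,k)$ with $N$ parts. Let $\beta$ consist of the positions $j$ whose part is labeled; these are distinct and at most $N$. Then set $\al_i=\la_i-\#\{j\in\beta: j\geq i\}$. The main point to verify, and the only real obstacle, is that $\al$ is a partition into distinct positive parts. From the identity above, $\al_i-\al_{i+1}=(\la_i-\la_{i+1})-(c_i-c_{i+1})$. This is at least $1$ in both cases: at least $1-0$ when $\la_i$ is unlabeled, and at least $2-1$ when it is labeled. Applying the same computation at $i=N$ with $\al_{N+1}=0$ gives $\al_N\geq 1$.

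The two maps are visibly mutually inverse, since each only adds or subtracts the same counts $c_i$. Together they give the bijection $g_1$ with the stated statistics.
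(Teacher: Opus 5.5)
Your proposal is correct and is the same construction the paper uses. The paper derives $g_1$ from its combinatorial reading of the $n$-th summand: for each $j\in\beta$, label the $j$-th largest part of $\al$ by $x$ and add $1$ to each of the first $j$ parts. The paper then simply asserts that this gives the bijection. Your explicit formula $\la_i=\al_i+\#\{s:\beta_s\geq i\}$, together with the gap identity $\la_i-\la_{i+1}=(\al_i-\al_{i+1})+(c_i-c_{i+1})$, supplies the verification (well-definedness and invertibility) that the paper leaves implicit.
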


In view of these preliminary discussions, we only need to provide the proof for $A(n, k)=A_{-1}(n, k)$. First, let us examine the analytic proof.

\begin{proof}[Analytic of Theorem \ref{thm:res2}]
We have
\begin{align*}
\sum_{n, k\geq 0}A(n, k)x^kq^n&=\sum_{n, k\geq 0}\sum_{\la\in \sA(n, k)}x^{\ell_x(\la)}q^{|\la|}\\
&=\sum_{n\geq 0}\frac{(1+xq)(1+xq^2)\cdots (1+xq^n)}{(1-q)(1-q^2)\cdots (1-q^{n})}q^{\frac{n^2+n}{2}}\\
&=\sum_{n\geq 0}\frac{(\frac{1}{xq}+1)(\frac{1}{xq^2}+1)\cdots (\frac{1}{xq^n}+1)}{(1-q)(1-q^2)\cdots (1-q^{n})}x^nq^{n^2+n}\\
&=\sum_{n\geq 0}\frac{x^nq^{2+4+\cdots +2n}}{(1-q)(1-q^2)\cdots (1-q^{n})}\times\left[(\frac{1}{xq}+1)(\frac{1}{xq^2}+1)\cdots (\frac{1}{xq^n}+1)\right]\\
&=\sum_{n, k\geq 0}\sum_{\si=(\pi, \nu)\in\sA_{-1}(n, k)}\left[x^{\ell^+(\si)}q^{|\pi|}\right]\times \left[\frac{1}{x^{\ell^-(\si)}q^{|\nu|}}\right]\\
&=\sum_{n, k\geq 0}\sum_{\si\in \sA_{-1}(n, k)}x^{\ell^+(\si)-\ell^-(\si)}q^{|\si|}\\
&=\sum_{n, k\geq 0}A_{-1}(n, k)x^ky^n.
\end{align*}
\end{proof}

We conclude this section with the construction of the following bijection.

\begin{lemma}\label{lem:g2}
For any nonnegative integers $n$ and $k$, there exists a bijection 
\begin{align*}
g_2: \sA(n, k)&\ri \sA_{-1}(n, k)\\
\la&\mapsto \si
\end{align*}
such that $|\la|=|\si|$ and $\ell_x(\la)=\ell^+(\si)-\ell^-(\si)$.
\end{lemma}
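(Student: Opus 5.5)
We construct $g_2$ by the same kind of ``shift'' used for $f_2$ in Lemma \ref{lem:f2}. The statistics tell us what to aim for. If $\la\in\sA(n,k)$ has $m$ distinct parts, of which $k$ are labeled $x$, then $\si=(\pi,\nu)$ should have $\ell^+(\si)=m$ positive parts and $\ell^-(\si)=m-k$ negative parts. So the \emph{unlabeled} parts of $\la$ should correspond to the negative parts, and a negative part should record the \emph{position} of an unlabeled part. This is consistent with the requirement that negative parts be distinct and at most $\ell^+(\si)=m$.

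\textbf{The map.} Let $1\le i_1<\cdots<i_{m-k}\le m$ be the positions of the unlabeled parts of $\la$. For each $t$, add $1$ to each of the parts $\la_1,\dots,\la_{i_t}$, and record a negative part $i_t$. Concretely,
\begin{align*}
\pi_j=\la_j+\#\{t: i_t\ge j\},\qquad \nu=i_{m-k}+\cdots+i_1 .
\end{align*}
The weight is preserved, since the total amount added is $\sum_t i_t=|\nu|$, so $|\pi|-|\nu|=|\la|$. Also $\nu$ has distinct parts, all at most $m$, and $\ell^+(\si)-\ell^-(\si)=m-(m-k)=k=\ell_x(\la)$.

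\textbf{Checking the conditions on $\pi$.} Use the convention $\la_{m+1}=\pi_{m+1}=0$. Then for $1\le j\le m$,
\begin{align*}
\pi_j-\pi_{j+1}=\la_j-\la_{j+1}+[\,j\text{ is unlabeled}\,].
\end{align*}
If $\la_j$ is labeled, the definition of $\sA(n)$ gives $\la_j-\la_{j+1}\ge 2$. If $\la_j$ is unlabeled, distinctness gives $\la_j-\la_{j+1}\ge1$, and the indicator adds $1$. In both cases $\pi_j-\pi_{j+1}\ge2$. Taking $j=m$ yields $\pi_m\ge2$. Hence $\si\in\sA_{-1}(n,k)$.

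\textbf{The inverse.} Given $\si=(\pi,\nu)\in\sA_{-1}(n,k)$ with $\ell^+(\si)=m$, the parts of $\nu$ are distinct elements of $\{1,\dots,m\}$. Declare these to be the unlabeled positions, label every other position by $x$, and set $\la_j=\pi_j-\#\{\text{parts of }\nu\ \ge j\}$. The same difference identity, read backwards, gives the required properties:
\begin{itemize}
\item if $j$ is labeled, $\la_j-\la_{j+1}=\pi_j-\pi_{j+1}\ge2$;
\item if $j$ is unlabeled, $\la_j-\la_{j+1}\ge1$.
\end{itemize}
In particular $\la_m\ge1$, so $\la$ is a partition into distinct positive parts whose labeled parts satisfy the gap condition. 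Thus $\la\in\sA(n,k)$, and the two maps are clearly mutually inverse.

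The only delicate point is the boundary case $j=m$. There the convention $\la_{m+1}=0$ in the definition of $\sA(n)$ (so that a labeled last part must be at least $2$) has to match exactly the requirement that the smallest positive part of $\si$ be at least $2$. With that convention the computation goes through uniformly in $j$.
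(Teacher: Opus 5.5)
Your proposal is correct and is essentially the paper's proof. The paper uses the same map, $\pi_j=\la_j+k-t$ for $i_t<j\le i_{t+1}$ (that is, $\pi_j=\la_j+\#\{t: i_t\ge j\}$), with $\nu$ the list of positions of the unlabeled parts; it only asserts the gap conditions and the inverse, which your identity $\pi_j-\pi_{j+1}=\la_j-\la_{j+1}+[\,j\text{ unlabeled}\,]$ checks explicitly. Your indexing of the unlabeled positions as $i_1<\cdots<i_{m-k}$ is also the consistent one, whereas the paper writes $i_1<\cdots<i_k$ for them, which clashes with $\ell_x(\la)=k$.
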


\begin{proof}
For a given partition $\la=\la_1+\la_2+\cdots +\la_m\in \sA(n, k)$, without loss of generality, assume that $1\leq i_1<i_2<\cdots <i_k\leq m$ are the positions of all parts not labeled as $x$. We can describe the operation of this map $g_2$ as follows:
\begin{align*}
\la\ri \pi=\pi_1+\cdots +\pi_m,\text{ where }\pi_j=\la_j+k-t\text{ if }i_t<j\leq i_{t+1} \text{ for }0\leq t\leq k,
\end{align*}
and we set $i_0=0$ and $i_{k+1}=m$. Note that if we remove all labels in $\pi$, then $\pi$ is a partition where parts differ by at least $2$ and the smallest part is at least $2$. Moreover, we have
\begin{align*}
\nu=i_k+i_{k-1}+\cdots +i_{1},
\end{align*}
 and it is a partition where parts are distinct and at most $i_k\leq \ell(\pi)=m$. Therefore, we obtain $\si=(\pi, \nu)\in \sA_{-1}(n, k)$. One can readily check that 
\begin{align*}
|\si|&=|\pi|-|\nu|\\
&=|\la|+\sum_{t=0}^k(k-t)(i_{t+1}-i_{t})-\sum_{t=1}^ki_t\\
&=|\la|.
\end{align*}
On the other hand, for the inverse map $g_2^{-1}$, we can reverse the above construction step by step, which will not be elaborated here. Thus, the proof is complete.
\end{proof}

\begin{remark}
From the bijection $g_2$ constructed in Lemma \ref{lem:g2}, we can see that it in fact preserves the number of parts, i.e., $\ell(\la)=\ell^+(g_2(\la))$ for any $\la\in \sA(n, k)$.
\end{remark}

\section{Proof of Theorem \ref{thm:res3}}\label{sec:res3}

In this section, we focus our attention on the combinatorial proof of Theorem \ref{thm:res3}. Before proceeding, we need to make a slight modification using Euler's theorem. One has
\begin{align*}
\sum_{n, k\geq 0}\sP_{ed}(n, k)x^kq^n=\frac{(-xq^2; q^2)_{\infty}}{(q; q^2)_{\infty}}=(-q; q)_{\infty}(-xq^2; q^2)_{\infty}=\sum_{n, k\geq 0}\sC(n, k)x^kq^n,
\end{align*}
where $\sC(n, k)$ is the set of partition pairs $(\al, \beta)$ of $n$ where $\al$ is a partition into distinct parts and $\beta$ is a partition into distinct even parts with $\ell(\beta)=k$. 

Next, and most importantly, we shall establish a bijection between $\sC(n, k)$ and $\sB(n, k)$ as follows, which will serve to prove Theorem \ref{thm:res3}. 

\begin{lemma}\label{lem:h}
For any nonnegative integers $n$ and $k$, there exists a bijection 
\begin{align*}
h: \sC(n, k)&\ri \sB(n, k)\\
(\al, \beta)&\mapsto \la
\end{align*}
such that $|\al|+|\beta|=|\la|$ and $\ell(\beta)=\ell_r(\la)$.
\end{lemma}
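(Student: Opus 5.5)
We will construct $h$ as an insertion procedure. The parts of $\alpha$ become the blue parts, and each part $2c$ of $\beta$ is split into a red part $c_r$ together with a matched blue partner. The weight and the statistic $\ell(\beta)=\ell_r(\lambda)$ are then automatic. The real content is keeping the blue parts distinct and condition (b) valid at every stage, and proving invertibility.

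\emph{Forward map.} Start with $\lambda^{(0)}$ equal to $\alpha$ with every part coloured blue; this lies in $\mathscr{B}(|\alpha|,0)$. Write $\beta=2c_1+2c_2+\cdots+2c_k$ with $c_1<c_2<\cdots<c_k$, and insert the parts in increasing order. To insert $2c$ into the current $\lambda^{(i)}$:
\begin{enumerate}[(i)]
\item If neither $c_b$ nor $c_r$ is present, add $c_r+c_b$. The red $c_r$ is matched with the new blue $c_b$.
\item Otherwise $c_b$ is already used, either as an unmatched part of $\alpha$ or as an earlier partner. Let $c,c+1,\dots,c+j$ be the maximal run of consecutive blue parts starting at $c$. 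We redistribute the extra weight $2c$ along this run so that one new red part $c_r$ appears, now matched with the blue $(c+1)_b$ in the ``$c+1$'' case of (b). The remaining weight $c$ is absorbed by shifting the top of the run upward to the first free blue position.
\end{enumerate}
Because the $c_i$ increase and red parts are inserted in increasing order, each new red is the largest red so far. Consequently the greedy matching of (b), read from smallest to largest, is unaffected below $c$. This is exactly why the smallest-to-largest clause in (b) is the right one to exploit.

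\emph{Inverse map.} Read $\lambda\in\mathscr{B}(n,k)$ from the largest red part downwards. Use the canonical greedy matching: each red $c_r$ takes blue $c_b$ if it is free, and otherwise $(c+1)_b$. Undo the latest insertion, recovering the part $2c$ of $\beta$ and the previous configuration. At the end only blue parts remain, and these form $\alpha$.

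The key claim is that the forward and inverse rules are mutually inverse. This requires two things. First, the case distinction (i)/(ii) must be recoverable from $\lambda$: partner equal to $c$ versus $c+1$, together with the shape of the adjacent blue run. Second, after undoing the largest red part the remaining configuration must still satisfy (b). We would prove both by induction on $k$, keeping the invariant that all matched blue partners lie at positions at most one above their reds.

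I expect step (ii) to be the main obstacle. There we must specify precisely how a run of blue parts absorbs the extra weight while keeping all blue parts distinct, and check that the result is uniquely decodable. If the run-shifting rule turns out to be ambiguous, I would fall back on $2$-modular or staircase bookkeeping in the style of Lemmas \ref{lem:f1} and \ref{lem:g2}. The plan there is to subtract a staircase from $\alpha$ and $\beta$, interleave, and then re-add. Before committing to the final rule, I would check it against the small examples listed for $\mathscr{B}(4)$.
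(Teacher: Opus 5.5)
There is a genuine gap, and it lies in the design of the map itself, not just in the unfinished step (ii). Your construction keeps the invariant that the red parts of the image are exactly $c_1,\dots,c_k$, the halves of the parts of $\beta$. Each insertion of $2c$ creates the single new red $c_r$, earlier reds are untouched, and your inverse reads $2c$ off the largest red. No bijection $\sC(n,k)\to\sB(n,k)$ can have this property.

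Take $n=5$, $k=1$. The pairs $(3,2)$ and $(2+1,2)$ share $\beta=2$, so both would have to land on an element of $\sB(5,1)$ whose only red part is $1_r$. Its blue parts are distinct, sum to $4$, and must include $1_b$ or $2_b$; this leaves only $3_b+1_b+1_r$. Dually, $3_b+2_r$ and $2_b+1_b+2_r$ would both need the single preimage $(1,4)$.

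Read literally, your rule (ii) produces exactly this collision. It sends $(2+1,2)$ to $3_b+1_b+1_r$ by moving the top of the run $1,2$ up to $3$, which is also the image of $(3,2)$ under (i). Rule (ii) also fails to conserve weight. By maximality of the run, the first free position above $c+j$ is $c+j+1$, so the move adds $1$ rather than $c$. The total added is then $c+1\neq 2c$ unless $c=1$.

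The missing idea is that the red parts must carry information from $\alpha$ as well as from $\beta$. The paper does this in three steps:
\begin{enumerate}[(1)]
\item It removes the parts $s_1<s_2<\cdots$ of $\alpha$ that are at most $k=\ell(\beta)$ and adds their conjugate to $\beta$, i.e.\ $\beta'_j=\beta_j+\#\{i: s_i\ge j\}$.
\item It splits each $\beta'_j$ into a pair $\lceil\beta'_j/2\rceil+\lfloor\beta'_j/2\rfloor_r$.
\item It slides each pair past the remaining large parts $a$ of $\alpha$ using the weight-transferring moves $a+c+c_r\to(c+1)+c_r+(a-1)$ and $a+c+(c-1)_r\to c+c_r+(a-1)$. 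The second move raises the red part.
\end{enumerate}
On the example above this gives $(3,2)\mapsto 3_b+1_b+1_r$, $(2+1,2)\mapsto 2_b+2_r+1_b$ and $(1,4)\mapsto 3_b+2_r$. So the red part coming from $\beta=2$ can be $2$ rather than $1$.

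Your fallback to staircase bookkeeping points in this direction; it is essentially the paper's Step 1. As it stands, however, the proposal does not specify a complete map, and the map it does specify cannot be injective.
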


\begin{proof}
For a given partition pair $(\al, \beta)\in \sC(n, k)$, we first describe the map $h$ from $\sC(n, k)$ to $\sB(n, k)$. Note that when $k=0$, that is, $\beta=\ep$ (the empty partition), we only need to color all parts of $\al$ blue to obtain $\la\in\sB(n, 0)$. Henceforth, for the case $k\geq 1$, we divide the description of this map into the following three steps.
\begin{description}
\item[Step 1] Assume that $\al_{p+1}$ is the largest part of $\al$ not exceeding $\ell(\beta)$. Then we can perform the following operations:
\begin{align*}
\al\ri\al'=\al_1+\al_2+\cdots +\al_{p}.
\end{align*}
Suppose that $\al_{t}=s_{\ell(\al)+1-t}$ for all $p+1\leq t\leq \ell(\al)$, then we have
\begin{align*}
\beta\ri \beta'=\beta_1'+\beta_2'+\cdots +\beta_k',\end{align*}
where $$\beta_j'=\beta_j+\ell(\al)-p-t\text{ if }s_{t}+1\leq j\leq s_{t+1}\text{ for }0\leq t\leq \ell(\al)-p,$$
and we set $s_0=0$ and $s_{\ell(\al)-p+1}=k$. One can readily verify that 
\begin{align*}
|\beta'|=|\beta|+\sum_{t=0}^{\ell(\al)-p}(s_{t+1}-s_t)(\ell(\al)-p-t)=|\beta|+\sum_{t=1}^{\ell(\al)-p}s_t=|\beta|+\sum_{t=p+1}^{\ell(\al)}\al_t.
\end{align*}

\item[Step 2] We decompose each $\beta_j'$ in $\beta'$ into a pair as follows:
\begin{align*}
\beta_j'\ri \beta_j''=\begin{cases}
\dfrac{\beta_j'}{2}+(\dfrac{\beta_j'}{2})_r & \text{ if }\beta_j'\text{ even},\\
\dfrac{\beta_j'+1}{2}+(\dfrac{\beta_j'-1}{2})_r & \text{ if }\beta_j'\text{ odd},
\end{cases}
\end{align*}
while keeping their original position in $\beta'$. This yields a new colored partition $\beta''=\beta_1''+\cdots +\beta_k''$, where each $\beta_j''$ actually consists of two parts; hence $\ell(\beta'')=2k$.

\item[Step 3] We place each $\beta_j''$ at the end of $\al'$, and then perform the following weight-preserving ``forward adjustment'' until this pair of parts satisfied the partition ordering with respect to its neighboring parts.
\begin{itemize}
\item $a+\underbrace{c+c_r}_{\text{pair}}\xri{\text{forward adjustment}} \underbrace{(c+1)+c_r}_{\text{pair}}+(a-1).$

\item $a+\underbrace{c+(c-1)_r}_{\text{pair}}\xri{\text{forward adjustment}} \underbrace{c+c_r}_{\text{pair}}+(a-1).$
\end{itemize}
Let $j$ run from $1$ to $k$ sequentially such that we obtain $\la'$. Finally, color all uncolored parts blue in $\la'$. In this way we get the resulting bicolored partition $\la$. It is easy to see that $\la\in \sB(n, k)$ is well-defined and satisfied $|\al|+|\beta|=|\la|$ and $\ell(\beta)=\ell_r(\la)$.
\end{description}

On the other hand, for the inverse map $h^{-1}$, we can construct it step by step according to the map $h$. It should be noted that the ``forward adjustment'' becomes the ``backward adjustment'', that is:
\begin{itemize}
\item $\underbrace{c+c_r}_{\text{pair}}+a\xri{\text{backward adjustment}} (a+1)+\underbrace{c+(c-1)_r}_{\text{pair}}.$

\item $\underbrace{(c+1)+c_r}_{\text{pair}}+a\xri{\text{backward adjustment}} (a+1)+\underbrace{c+c_r}_{\text{pair}}.$
\end{itemize}
Therefore, the proof is complete.
\end{proof}

\begin{example}
For the bijection $h$ in Lemma \ref{lem:h}, we present a one-to-one correspondence example for $n=11$ as follows.
\\

\begin{minipage}[c]{0.5\textwidth}
\centering
\begin{tabular}{r|l}
$\sC$ & $\sB$\\
\hline
(11, $\ep$) & 11\\
(10+1, $\ep$) & 10+1\\
(9+2, $\ep$) & 9+2\\
(8+3, $\ep$) & 8+3\\
(7+4, $\ep$) & 7+4\\
(6+5, $\ep$) & 6+5\\
(8+2+1, $\ep$) & 8+2+1\\
(7+3+1, $\ep$) & 7+3+1\\
(6+4+1, $\ep$) & 6+4+1\\
(6+3+2, $\ep$) & 6+3+2\\
(5+4+2, $\ep$) & 5+4+2\\
(5+3+2+1, $\ep$) & 5+3+2+1\\
(9, 2) & 9+1+$1_r$\\
(8+1, 2) & 8+2+$1_r$\\
(7+2, 2) & 7+2+1+$1_r$\\
(6+3, 2) & 6+3+1+$1_r$\\
(5+4, 2) & 5+4+1+$1_r$\\
(6+2+1, 2) & 6+2+$2_r$+1\\
(5+3+1, 2) & 5+3+2+$1_r$
\end{tabular}
\end{minipage}
\begin{minipage}[c]{0.5\textwidth}
\centering
\begin{tabular}{r|l}
$\sC$ & $\sB$\\
\hline
(4+3+2, 2) & 4+3+2+1+$1_r$\\
(7, 4) & 7+2+$2_r$\\
(6+1, 4) & 6+3+$2_r$\\
(5+2, 4) & 5+3+$2_r$+1\\
(4+3, 4) & 4+3+2+$2_r$\\
(4+2+1, 4) & 4+3+$3_r$+1\\
(5, 6) & 5+3+$3_r$\\
(4+1, 6) & 4+$4_r$+3\\
(3+2, 6) & 4+$4_r$+2+1\\
(3, 8) & 5+$4_r$+2\\
(2+1, 8) & 5+$5_r$+1\\
(1, 10) & 6+$5_r$\\
(5, 4+2) & 5+2+$2_r$+1+$1_r$\\
(4+1, 4+2) & 4+3+$2_r$+1+$1_r$\\
(3+2, 4+2) & 3+$3_r$+2+$2_r$+1\\
(3, 6+2) & 4+$3_r$+2+1+$1_r$\\
(2+1, 6+2) & 4+$4_r$+2+$1_r$\\
(1, 8+2) & 5+$4_r$+1+$1_r$\\
(1, 6+4) & 4+$3_r$+2+$2_r$\\
\end{tabular}
\end{minipage}

\end{example}


\section*{Acknowledgement}
The author thanks the anonymous referee for his/her valuable comments and suggestions which have greatly improved the presentation of this paper.


\end{document}